\documentclass[12pt]{amsart}
\usepackage{amsfonts, amsbsy, amsmath, amssymb}

\hoffset -1.5cm

\voffset -1cm

\textwidth 15.5truecm

\textheight 22.5truecm

\newtheorem{thm}{Theorem}[section]

\newtheorem{cor}[thm]{Corollary}

\newtheorem{fac}[thm]{Fact}

\newtheorem{thm-con}[thm]{Theorem-Conjecture}
\numberwithin{equation}{section}

\theoremstyle{definition}

\begin{document}

\title[A note on the permutation behaviour of the polynomial $g_{n,q}$]{A note on the permutation behaviour of the polynomial $g_{n,q}$}

\author[Neranga Fernando]{Neranga Fernando}
\address{Department of Mathematics,
Northeastern University, Boston, MA 02115}
\email{w.fernando@northeastern.edu}

\begin{abstract}
Let $q=4$ and $k$ a positive integer. In this short note, we present a class of permutation polynomials over $\Bbb F_{q^{3k}}$. We also present a generalization.  
\end{abstract}

\keywords{finite field, permutation polynomial}

\subjclass[2010]{11T06, 11T55}

\maketitle

\section{Introduction}

For a prime power $q$, let $\Bbb F_q$ denote the finite field with $q$ elements. Let $e$ be a positive integer. A polynomial $f \in \Bbb F_q[x]$ is called a {\it permutation polynomial} (PP) of $\Bbb F_q$ if it induces a permutation of $\Bbb F_q$. Define 
$$S_{k,q} = x+x^q+\cdots +x^{q^{k-1}} \in \Bbb F_p[x],$$
where $k$ is a positive integer and $p=\textnormal{Char}\Bbb F_q$. When $q$ is fixed, we write $S_{k,q}=S_k$. Note that $S_e=\textnormal{Tr}_{q^e/q}$, where $\textnormal{Tr}_{q^e/q}$ is the {\it trace} function from $\Bbb F_{q^e}$ to $\Bbb F_q$.

PPs over $\Bbb F_{q^{3k}}$, where $q$ is a power of $2$ have been studied in \cite{Fernando-Hou-Lappano-FFA-2013, Hou13, Yuan-Ding}. In \cite{Hou13}, Hou confirmed a class of PPs over $\Bbb F_{4^{3k}}$ that was conjectured in \cite{Fernando-Hou-Lappano-FFA-2013}. In \cite{Yuan-Ding}, Yuan and Ding generalized Hou's result and obtained several more classes of PPs of the form $L(x)+S_{2k}^a+S_{2k}^b$ over $\Bbb F_{q^{3k}}$, where $q$ is a power of $2$. In this paper, we present a new class of PPs over $\Bbb F_{4^{3k}}$, where $k>0$ is an even integer. 

Table 1 in \cite{Fernando-Hou-FFA-2015} contains PPs over $\Bbb F_{4^e}$ defined by the polynomial $g_{n,q}$ which was first introduced in \cite{Hou-JCTA-2011} by Hou. Permutation property of $g_{n,q}$ was studied in \cite{Hou-FFA-2012, Fernando-Hou-Lappano-FFA-2013}. We call a triple of integers $(n,e;q)$ {\em desirable} if $g_{n,q}$ is a PP of $\Bbb F_{q^e}$. We refer the reader to \cite{Hou-FFA-2012} for more background of the polynomial $g_{n,q}$. 

Desirable triples obtained from computer searches were presented in \cite{Fernando-Hou-FFA-2015, Fernando-Hou-Lappano-FFA-2013, Hou-FFA-2012}. In \cite{Fernando-Hou-FFA-2015}, Hou and the author of the present paper discovered several new classes of PPs which explained several entries of Table 3 in \cite{Fernando-Hou-Lappano-FFA-2013}. In this short note, we explain yet another entry of Table 1 in \cite{Fernando-Hou-FFA-2015}. 

The note is organized as follows. 

In Section 2, we present a new class of PPs over $\Bbb F_{4^{3k}}$. In section 3, we explain an entry of Table 1 in \cite{Fernando-Hou-FFA-2015} using the results obtained in Section 2. In Section 4, we present a generalization.


\section{A class of permutation polynomials over $\Bbb F_{q^{3k}}$}

We recall two facts: 

\begin{fac}\label{f1} (\cite[Theorem 7.7]{Lidl-Niederreiter-97}) Let $f \in \Bbb F_q[x]$. Then $f$ is a PP of $\Bbb F_q$ if and only if for all $a\in \Bbb F_q^*$, $\sum_{x\in\Bbb F_q}\zeta_p^{\textnormal{Tr}_{q/p}(af(x))}=0$, where $p=\textnormal{Char}\Bbb F_q$ and $\zeta_p=e^{2\pi i/p}$.
\end{fac}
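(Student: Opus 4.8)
This is the classical character-sum criterion of Lidl and Niederreiter, and the plan is to recover it from the orthogonality relations for the additive characters of $\Bbb F_q$. For $a\in\Bbb F_q$ put $\psi_a(x)=\zeta_p^{\textnormal{Tr}_{q/p}(ax)}$; as $a$ ranges over $\Bbb F_q$ these are exactly the additive characters of $\Bbb F_q$, with $\psi_0\equiv 1$ and $\psi_a$ nontrivial whenever $a\neq 0$ (this uses that $\textnormal{Tr}_{q/p}$ is surjective onto $\Bbb F_p$, hence not identically zero). The only ingredient I need is
\begin{equation*}
\sum_{x\in\Bbb F_q}\psi_a(x)=\begin{cases} q, & a=0,\\ 0, & a\neq 0,\end{cases}
\end{equation*}
the second case following because, picking $x_0$ with $\psi_a(x_0)\neq 1$, the substitution $x\mapsto x+x_0$ gives $\psi_a(x_0)\sum_x\psi_a(x)=\sum_x\psi_a(x)$, forcing the sum to vanish. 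Granting this, the forward implication is immediate: if $f$ permutes $\Bbb F_q$, then for any $a\in\Bbb F_q^*$ the reindexing $y=f(x)$ yields $\sum_{x\in\Bbb F_q}\zeta_p^{\textnormal{Tr}_{q/p}(af(x))}=\sum_{y\in\Bbb F_q}\psi_a(y)=0$.

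For the converse I count preimages by Fourier inversion. Fix $b\in\Bbb F_q$ and detect the condition $f(x)=b$ with the orthogonality relation above applied to $a(f(x)-b)$:
\begin{align*}
\#\{x\in\Bbb F_q : f(x)=b\}
&=\frac{1}{q}\sum_{a\in\Bbb F_q}\sum_{x\in\Bbb F_q}\zeta_p^{\textnormal{Tr}_{q/p}(a(f(x)-b))}\\
&=1+\frac{1}{q}\sum_{a\in\Bbb F_q^*}\zeta_p^{-\textnormal{Tr}_{q/p}(ab)}\sum_{x\in\Bbb F_q}\zeta_p^{\textnormal{Tr}_{q/p}(af(x))},
\end{align*}
where the term $a=0$ contributes $q/q=1$. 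By hypothesis every inner sum with $a\neq 0$ vanishes, so each fiber of $f$ has exactly one element, i.e. $f$ is a PP of $\Bbb F_q$.

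There is no genuine obstacle here: the whole argument is the standard character-sum count and is essentially self-contained once the orthogonality relation is in place. The only point that requires an input from the theory of finite fields is the surjectivity of $\textnormal{Tr}_{q/p}\colon\Bbb F_q\to\Bbb F_p$, which is exactly what makes $\psi_a$ a nontrivial character for every $a\neq 0$, and hence what makes both the orthogonality relation and the forward direction go through.
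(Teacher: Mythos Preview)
Your argument is correct and is exactly the standard character-sum proof of the Lidl--Niederreiter criterion. The paper, however, does not prove this statement at all: it is recorded as a \emph{Fact} with a citation to \cite[Theorem~7.7]{Lidl-Niederreiter-97}, so there is no in-paper proof to compare against. Your write-up is self-contained and would serve perfectly well as a replacement for the bare citation.
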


\begin{fac}\label{f2} (\cite[Lemma 6.13]{Fernando-Hou-Lappano-FFA-2013}) Let $p$ be a prime and $f:\Bbb F_p^n\to \Bbb F_p$ a function. If there exists a $y\in \Bbb F_p^n$ such that $f(x+y)-f(x)$ is a nonzero constant for all $x\in\Bbb F_p^n$, then $\sum_{x\in\Bbb F_p^n}\zeta_p^{f(x)}=0$.
\end{fac}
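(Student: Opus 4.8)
The plan is to run the standard additive-shift argument on the exponential sum. Write $c\in\Bbb F_p$ for the constant value of $f(x+y)-f(x)$; by hypothesis $c\neq 0$. Put $T=\sum_{x\in\Bbb F_p^n}\zeta_p^{f(x)}$, where throughout an element $a\in\Bbb F_p$ is identified with its representative in $\{0,1,\dots,p-1\}$, so that $\zeta_p^{a}$ is well defined and $\zeta_p^{a+b}=\zeta_p^{a}\zeta_p^{b}$ for all $a,b\in\Bbb F_p$.

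First I would reindex $T$ by the translation $x\mapsto x+y$, which is a bijection of $\Bbb F_p^n$; this gives $T=\sum_{x\in\Bbb F_p^n}\zeta_p^{f(x+y)}$. Then I would insert the hypothesis $f(x+y)=f(x)+c$ termwise, obtaining $T=\sum_{x\in\Bbb F_p^n}\zeta_p^{f(x)+c}=\zeta_p^{c}\sum_{x\in\Bbb F_p^n}\zeta_p^{f(x)}=\zeta_p^{c}\,T$. Hence $(1-\zeta_p^{c})\,T=0$, and since $c\neq 0$ in $\Bbb F_p$ we have $\zeta_p^{c}\neq 1$, so $T=0$.

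There is essentially no obstacle here: the only points to check are that $x\mapsto x+y$ permutes $\Bbb F_p^n$ and that $\zeta_p^{f(x)}$ is well defined with additive exponents modulo $p$, both immediate. (This is just Lemma 6.13 of \cite{Fernando-Hou-Lappano-FFA-2013}, recorded for completeness; it will be combined with Fact \ref{f1} to detect the permutation polynomials of the next sections.)
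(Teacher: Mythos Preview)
Your argument is correct: reindexing by the bijection $x\mapsto x+y$ gives $T=\zeta_p^{c}T$, and $c\ne 0$ forces $T=0$. The paper itself does not supply a proof of this fact---it merely quotes it from \cite[Lemma~6.13]{Fernando-Hou-Lappano-FFA-2013}---and the standard shift argument you give is exactly how that lemma is established.
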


The following theorem is the main theorem of this paper. 

\begin{thm}\label{T1}
Let $q=4$, $e=3k$ where $k>0$ is an even integer.
Then 
$$g=S_{k+1}^2+S_{2k}^{q^k+1}$$
is a PP of $\Bbb F_{q^e}$.
\end{thm}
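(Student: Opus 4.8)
The plan is to use Fact~\ref{f1} with the field $\Bbb F_{q^e}$, $q=4$, $p=2$: we must show $\sum_{x\in\Bbb F_{q^e}}(-1)^{\textnormal{Tr}_{q^e/2}(ag(x))}=0$ for every $a\in\Bbb F_{q^e}^*$. Since $\textnormal{Tr}_{q^e/2}=\textnormal{Tr}_{q/2}\circ\textnormal{Tr}_{q^e/q}$ and the relevant maps $S_{k+1}$, $S_{2k}$ take values that interact nicely with $\textnormal{Tr}_{q^e/q}$, the first step is to rewrite the exponent in terms of the trace down to $\Bbb F_q$ or to an intermediate field. The natural intermediate field here is $\Bbb F_{q^k}$ (note $e=3k$), and I expect the key identity to be that $S_{2k}$ maps $\Bbb F_{q^{3k}}$ onto $\Bbb F_{q^k}$ and $S_{k+1}$ behaves controllably modulo the subfield structure; one records the additive/semilinear relations $S_{2k}(x)^{q^k}$ vs.\ $S_{2k}(x)$ and similar for $S_{k+1}$, using $q^{3k}\equiv 1$ appropriately and the fact that $S_{2k}=S_k+S_k^{q^k}$ type decompositions hold.

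The heart of the argument will be to find, for each $a\in\Bbb F_{q^e}^*$, an element $y=y(a)\in\Bbb F_{q^e}$ such that
$$\textnormal{Tr}_{q^e/2}\big(a\,g(x+y)\big)-\textnormal{Tr}_{q^e/2}\big(a\,g(x)\big)$$
is a nonzero constant (independent of $x$), and then invoke Fact~\ref{f2} (with $p=2$, viewing $\Bbb F_{q^e}$ as $\Bbb F_2^{2e}$) to conclude the character sum vanishes. To locate $y$, I would expand $g(x+y)-g(x)$: the term $S_{k+1}^2$ is $2$-additive up to squaring, so $S_{k+1}(x+y)^2-S_{k+1}(x)^2=S_{k+1}(y)^2$, already constant in $x$; the term $S_{2k}^{q^k+1}=S_{2k}\cdot S_{2k}^{q^k}=N$ where $N$ denotes the relative norm-like form $\Bbb F_{q^{2k}}\to\Bbb F_{q^k}$ composed with $S_{2k}$, so
$$S_{2k}(x+y)^{q^k+1}-S_{2k}(x)^{q^k+1}=S_{2k}(x)S_{2k}(y)^{q^k}+S_{2k}(y)S_{2k}(x)^{q^k}+S_{2k}(y)^{q^k+1}.$$
Thus $g(x+y)-g(x)=S_{k+1}(y)^2+S_{2k}(y)^{q^k+1}+\big[S_{2k}(y)^{q^k}S_{2k}(x)+S_{2k}(y)S_{2k}(x)^{q^k}\big]$, and the bracketed part is $\textnormal{Tr}_{q^{2k}/q^k}\big(S_{2k}(y)^{q^k}S_{2k}(x)\big)\in\Bbb F_{q^k}$, which is additive and nonconstant in $x$ in general. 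The requirement is therefore that $a$ times this linear-in-$x$ expression have trace (to $\Bbb F_2$) equal to zero for all $x$; since $x\mapsto S_{2k}(x)$ is surjective onto $\Bbb F_{q^k}$, this forces a linear condition on $y$ of the form $\textnormal{Tr}_{q^k/2}\big(c\cdot S_{2k}(y)^{q^k}\, t + \cdots\big)=0$ for all $t\in\Bbb F_{q^k}$, where $c$ is built from $a$; one solves this for $S_{2k}(y)$, and because $S_{2k}$ is onto one may then choose $y$ itself. The evenness of $k$ should enter precisely at the point of showing this linear system is solvable with the leftover constant $S_{k+1}(y)^2+S_{2k}(y)^{q^k+1}$ (after the correct twist by $a$) being \emph{nonzero}; this is the main obstacle and will likely require a case analysis on whether $a$ lies in certain subfields or cosets, together with the observation that $k$ even makes $q^k+1\equiv 2\pmod 3$ or controls $\gcd(q^k-1,\cdot)$ in the needed way.

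The remaining step is bookkeeping: verify that the chosen $y$ indeed makes the increment a nonzero \emph{constant} (not merely killing the $x$-linear part but also ensuring the constant does not vanish), handle separately the possibly small set of $a$ for which the naive choice gives increment zero — for those $a$ one instead exhibits $y$ with $g(x+y)-g(x)$ outright nonzero constant via the $S_{k+1}(y)^2$ term alone by taking $y\in\ker S_{2k}$ with $S_{k+1}(y)\ne0$, which is possible by a dimension count since $\ker S_{2k}$ has $\Bbb F_q$-dimension $2k$ inside the $3k$-dimensional space and $S_{k+1}$ restricted there cannot be identically zero when $k$ is even — and then assemble everything through Fact~\ref{f1}. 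I expect the dimension counts and the nonvanishing of the final constant to be where the parity hypothesis $2\mid k$ is essential, and that is the part I would write out most carefully.
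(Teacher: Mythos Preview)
Your framework (Fact~\ref{f1} together with the shift trick of Fact~\ref{f2}) and your expansion of $g(x+y)-g(x)$ are both correct, and your ``fallback'' of taking $y\in\ker S_{2k}$ is exactly what the paper does in one of its two cases. However, the organization is inverted and there is a real gap. First a small correction: on $\Bbb F_{q^{3k}}$ one has $\ker S_{2k}=\Bbb F_{q^k}$, of $\Bbb F_q$-dimension $k$, not $2k$ (compute $\gcd(1+{\tt x}+\cdots+{\tt x}^{2k-1},\,{\tt x}^{3k}+1)={\tt x}^k+1$). The serious issue is that for $y\in\Bbb F_{q^k}$ one has $S_{k+1}(y)^2\in\Bbb F_{q^k}$, so
\[
\textnormal{Tr}_{q^e/2}\bigl(a\,S_{k+1}(y)^2\bigr)=\textnormal{Tr}_{q^k/2}\bigl(S_{k+1}(y)^2\cdot\textnormal{Tr}_{q^e/q^k}(a)\bigr),
\]
which vanishes for \emph{every} such $y$ as soon as $\textnormal{Tr}_{q^e/q^k}(a)=0$. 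Thus your fallback cannot handle those $a$, and your ``main'' route for them is left as a hope that some linear system has a solution with nonzero leftover constant. Incidentally, the hypothesis $2\mid k$ is used not where you guess but in the fallback case $\textnormal{Tr}_{q^e/q^k}(a)\ne 0$: it is precisely what makes $\gcd(1+{\tt x}+\cdots+{\tt x}^{k-2},\,{\tt x}^k+1)=1$, ensuring the constant can be chosen nonzero.

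The paper splits on $\textnormal{Tr}_{q^e/q^k}(a)$. When it is nonzero, it uses your fallback with $y\in\Bbb F_{q^k}^*$. When it is zero, the paper abandons Fact~\ref{f2}: writing $a=b+b^{q^k}$ gives $\textnormal{Tr}_{q^e/2}(ag(x))=\textnormal{Tr}_{q^e/2}\bigl(b\,(g(x)+g(x)^{q^{2k}})\bigr)$, and the key algebraic identity
\[
g+g^{q^{2k}}\equiv S_{2k}^{\,2q^{k+1}}\pmod{{\tt x}^{q^e}-{\tt x}}
\]
shows this is a nonzero $\Bbb F_2$-linear form in $x$, so the character sum vanishes directly. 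This identity is the missing ingredient in your plan; once you have it, you could equally observe that in this case your bilinear ``$x$-linear part'' has zero trace for every $y$ automatically, so your main route would in fact succeed --- but establishing that is equivalent to proving the identity above.
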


\begin{proof}

By Fact~\ref{f1}, it suffices to show that
\[
\sum_{x\in\Bbb F_{q^e}}(-1)^{\text{Tr}_{q^e/2}(ag(x))}=0
\]
for all $0\ne a\in\Bbb F_{q^e}$. We write $\text{Tr}=\text{Tr}_{q^e/2}$.

\medskip

{\bf Case 1.} Assume $\text{Tr}_{q^e/q^k}(a)\ne0$. By Fact~\ref{f2}, it suffices to show that there exists $y\in \Bbb F_{q^e}$ such that $\text{Tr}\bigl(ag(x+y)-ag(x)\bigr)$ is a nonzero constant for all $x\in\Bbb F_{q^e}$.

Let $y\in\Bbb F_{q^k}^*$. Then for $x\in\Bbb F_{q^e}$, we have
\[
\begin{split}
&\text{Tr}\bigl(ag(x+y)-ag(x)\bigr)\cr
=\,&\text{Tr}\bigl[a\bigl(S_{k+1}^2(x+y)+S_{2k}^{q^k+1}(x+y)-S_{k+1}^2(x)-S_{2k}^{q^k+1}(x)\bigr)\bigr]\cr
=\,&\text{Tr}(a\,S_{k+1}^2(y))\kern 3.8cm (S_{2k}(y)=0\ \text{since}\ y\in\Bbb F_{q^k})\cr
=\,&\text{Tr}(a\,S_{k-1}^{2q}(y)) \cr
=\,&\text{Tr}(b\,S_{k-1}(y)) \kern 3.8cm (a=b^{2q}) \cr
=\,&\text{Tr}\bigl(b\,(y+y^q+\cdots+y^{q^{k-2}})\bigr)\cr
=\,&\text{Tr}\bigl(y\,(b^{q^{3k}}+b^{q^{3k-1}}+\cdots+b^{q^{2k+2}})\bigr)\cr
=\,&\text{Tr}_{q^k/2}\bigl(y\,(c^{q^{2k+2}}+\cdots+c^{q^{3k}})\bigr), \kern 1cm  \text{where}\,\, c=\text{Tr}_{q^e/q^k}(b) \neq 0.
\end{split}
\]

Since 
\[
\begin{split}
&\text{gcd}({\tt x}^{2k+2}+{\tt x}^{2k+3}+\cdots+{\tt x}^{3k},\; {\tt x}^{k}+1)\cr
=\,&\text{gcd}(1+{\tt x}+\cdots+{\tt x}^{k-2},\; {\tt x}^{k}+1)\cr
=\,&1,
\end{split}
\]

$c^{q^{2k+2}}+\cdots+c^{q^{3k}}\neq 0$, and hence there exists $y\in\Bbb F_{q^k}$ such that $\text{Tr}\bigl(ag(x+y)-ag(x)\bigr) = \text{Tr}\bigl(y\,(b^{q^{3k}}+b^{q^{3k-1}}+\cdots+b^{q^{2k+2}})\bigr)$ is a nonzero constant.

{\bf Case 2.}  Assume $\text{Tr}_{q^e/q^k}(a)=0$. Then $a=b+b^{q^k}$ for some $b\in\Bbb F_{q^e}$. Since $a\ne 0$, we have $b\notin\Bbb F_{q^k}$. For $x\in\Bbb F_{q^e}$, we have
\[
\begin{split}
\text{Tr}\bigl(ag(x)\bigr)\,&=\text{Tr}\bigl(bg(x)+b^{q^k}g(x)\bigr)\cr
&=\text{Tr}\bigl(bg(x)+bg(x)^{q^{2k}}\bigr)\cr
&=\text{Tr}\bigl(b(g(x)+g(x)^{q^{2k}})\bigr).
\end{split}
\]

Note that 
$$ S_{2k}^{q^k} + S_{2k}^{q^{2k}} \equiv S_{2k} \pmod{{\tt x}^{q^e}-{\tt x}}$$
and
$$S_{k+1}^2+S_{k+1}^{2q^{2k}} \equiv x^2+x^{2q^k}+S_{2k}^2+S_{2k}^{2q^k} \pmod{{\tt x}^{q^e}-{\tt x}} $$ 

Then we have
\begin{equation}\label{e1}
\begin{split}
g({\tt x})+g({\tt x})^{q^{2k}}\,&\equiv S_{k+1}^2+S_{2k}^{q^k+1}+S_{k+1}^{2q^{2k}}+S_{2k}^{q^{2k}+1}\pmod{{\tt x}^{q^e}-{\tt x}}\cr
&= S_{k+1}^2+S_{k+1}^{2q^{2k}}+S_{2k}\cdot(S_{2k}^{q^k}+S_{2k}^{q^{2k}})\cr
&\equiv x^2+x^{2q^k}+S_{2k}^2+S_{2k}^{2q^k}+S_{2k}^2 \pmod{{\tt x}^{q^e}-{\tt x}}\cr
&=({\tt x}+{\tt x}^{q^{k}}+S_{2k}^{q^k})^2\cr
&=(S_{3k}+S_{k}^q)^2\cr
&\equiv (S_{2k}^{q^{k+1}})^2\pmod{{\tt x}^{q^e}-{\tt x}}.   
\end{split}
\end{equation}

So for $x\in\Bbb F_{q^e}$,
\[
\begin{split}
\text{Tr}\bigl(ag(x)\bigr)\,&=\text{Tr}\bigl(bS_{2k}(x)^{2q^{k+1}}\bigr)\cr
&=\text{Tr}\bigl(cS_{2k}(x)\bigr)\kern 3.3cm (b=c^{2q^{k+1}})\cr
&=\text{Tr}\bigl(c(x+x^q+\cdots+x^{q^{2k-1}})\bigr)\cr
&=\text{Tr}\bigl(x(c^{q^{3k}}+c^{q^{3k-1}}+\cdots+c^{q^{k+1}})\bigr).
\end{split}
\]
Since
\[
\begin{split}
&\text{gcd}({\tt x}^{k+1}+{\tt x}^{k+2}+\cdots+{\tt x}^{3k},\; {\tt x}^{3k}+1)\cr
=\,&\text{gcd}(1+{\tt x}+\cdots+{\tt x}^{2k-1},\; {\tt x}^{3k}+1)\cr
=\,&{\tt x}^k+1,
\end{split}
\]
we see that for $z\in \Bbb F_{q^{3k}}$, 
\[
z^{q^{3k}}+z^{q^{3k-1}}+\cdots+z^{q^{k+1}}=0 \Leftrightarrow z\in\Bbb F_{q^k}.
\]
Since $c\notin \Bbb F_{q^k}$, we have $c^{q^{3k}}+c^{q^{3k-1}}+\cdots+c^{q^{k+1}}\ne 0$. Therefore 
\[
\sum_{x\in\Bbb F_{q^e}}(-1)^{\text{Tr}(ag(x))}=\sum_{x\in\Bbb F_{q^e}}(-1)^{\text{Tr}(x(c^{q^{3k}}+c^{q^{3k-1}}+\cdots+c^{q^{k+1}}))}=0.
\]
\end{proof}


\section{Polynomial $g_{n,q}$}

We first recall two facts: 

\begin{fac}\label{f5} (\cite[Eq.~4.1]{Hou-FFA-2012}.) 
For $m,q \geq 0$, we have 
$$g_{m+q^a,a} = g_{m+1,q} + S_a \cdot g_{m,q}.$$
\end{fac}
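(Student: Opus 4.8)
The plan is to recast the recursion as a formal power series identity and then as a polynomial divisibility. Put $G(z)=\sum_{n\ge 0}g_{n,q}\,z^{n}\in\Bbb F_p[x][[z]]$. The case $a=1$ of the statement is the basic recursion $g_{m+q,q}=g_{m+1,q}+x\,g_{m,q}$ for $m\ge 0$ (here $S_{1}=x$), a known property of $g_{n,q}$ (\cite{Hou-JCTA-2011, Hou-FFA-2012}); rewritten in terms of $G$ it is precisely the assertion that
$$G(z)\,D(z)=N(z),\qquad D(z):=1-z^{q-1}-x\,z^{q},$$
for some $N(z)\in\Bbb F_p[x][z]$ with $\deg_z N<q$. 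For general $a$, isolating the two shifts $\sum_{m\ge 0}g_{m+q^{a},q}\,z^{m+q^{a}}$ and $\sum_{m\ge 0}g_{m+1,q}\,z^{m+q^{a}}$ inside $G$ and clearing the pole at $z=0$ shows that the identity to be proved is equivalent to the statement that $G(z)\,D_a(z)$ is a polynomial in $z$ of degree $<q^{a}$, where
$$D_a(z):=1-z^{\,q^{a}-1}-S_{a}(x)\,z^{\,q^{a}}.$$
Since $G=N/D$, it therefore suffices to prove the divisibility $D(z)\mid D_a(z)$ in $\Bbb F_p(x)[z]$: then $G(z)D_a(z)=N(z)\cdot\bigl(D_a(z)/D(z)\bigr)$ has $z$-degree $\le(q-1)+(q^{a}-q)<q^{a}$, and it automatically lies in $\Bbb F_p[x][z]$, being simultaneously a power series over $\Bbb F_p[x]$.

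To establish $D\mid D_a$, note first that $D$ is squarefree: since $p\mid q$ we have $D'(z)=-(q-1)z^{q-2}$, a nonzero scalar multiple of $z^{q-2}$, which is coprime to $D$ because $D(0)=1\ne 0$. Hence it is enough to check that every root $\alpha$ of $D$ in an algebraic closure of $\Bbb F_p(x)$ is a root of $D_a$. From $D(\alpha)=0$ and $\alpha\ne 0$ we get $\alpha^{q-1}(1+x\alpha)=1$, so $1+x\alpha\ne 0$ and $\alpha^{q}=\alpha/(1+x\alpha)$. Applying the $q$-power (Frobenius) map and using $S_{a+1}(x)=x+S_{a}(x)^{q}$, induction on $a$ gives
$$\alpha^{\,q^{a}}=\frac{\alpha}{1+S_{a}(x)\,\alpha}\qquad(a\ge 1).$$
Substituting this into $D_a(\alpha)=1-\alpha^{q^{a}}\bigl(\alpha^{-1}+S_{a}(x)\bigr)$ yields $D_a(\alpha)=0$, so $D\mid D_a$, and the identity follows.

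The main obstacle is the bookkeeping in the reduction step: keeping track of the finitely many low-order truncation terms when the shifts are isolated in $G$, so that the equivalence with ``$G(z)D_a(z)$ a polynomial of degree $<q^{a}$'' emerges cleanly; the remaining steps (squarefreeness of $D$, and the root computation $\alpha^{q^{a}}=\alpha/(1+S_{a}(x)\alpha)$) are short. If one would rather avoid generating functions, the same two ingredients — the case $a=1$ and the Frobenius identity $S_{a+1}=x+S_{a}^{q}$ — can be assembled into a direct induction on $a$ in which the degree-$q$ recursion is iterated to climb from $q^{a}$ to $q^{a+1}$; I expect the power series formulation to be the less error-prone of the two.
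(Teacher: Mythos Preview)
The paper does not supply a proof of this fact; it is quoted from \cite[Eq.~4.1]{Hou-FFA-2012} (note also the evident misprint $g_{m+q^a,a}$ for $g_{m+q^a,q}$, which you have silently corrected). Your argument is correct: the reduction to the divisibility $D\mid D_a$ in $\Bbb F_p(x)[z]$ is clean, the squarefreeness of $D$ holds because $p\mid q$ forces $D'(z)=z^{q-2}$ (coprime to $D$ since $D(0)=1$), and the inductive computation $\alpha^{q^{a}}=\alpha/(1+S_a(x)\alpha)$ from $\alpha^{q}=\alpha/(1+x\alpha)$ together with $S_{a+1}=x+S_a^{\,q}$ goes through exactly as you describe.

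For comparison, the usual derivation --- and presumably the one in the cited source --- is the shift-operator version of the same idea: the base recursion reads $(E^{q}-E-x)\,g=0$ with $(Eg)_n=g_{n+1}$, and one checks $(E^{q}-E-x)\mid(E^{q^{a}}-E-S_a)$ via the Artin--Schreier iteration $\beta^{q}=\beta+x\Rightarrow\beta^{q^{a}}=\beta+S_a$. Under the substitution $E=z^{-1}$ this is exactly your divisibility $D\mid D_a$, so the two routes coincide; your generating-function packaging simply makes the degree bound on the low-order remainder terms automatic rather than something to track by hand.
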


\begin{fac}\label{f4}  (\cite[Theorem~6.1]{Fernando-Hou-Lappano-FFA-2013}.)
Let $q\ge 4$ be even, and let
\[
n=1+q^{a_1}+q^{b_1}+\cdots+q^{a_{q/2}}+q^{b_{q/2}},
\]
where $a_i,b_i\ge 0$ are integers.
Then
$$g_{n,q}=\sum_iS_{a_i}S_{b_i}+\sum_{i<j}(S_{a_i}+S_{b_i})(S_{a_j}+S_{b_j}).$$
\end{fac}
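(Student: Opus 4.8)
\emph{Reformulation.} The first thing I would do is recognize the right-hand side as a single elementary symmetric polynomial. Relabel the $q$ functions $S_{a_1},S_{b_1},\ldots,S_{a_{q/2}},S_{b_{q/2}}$ as $u_1,\ldots,u_q$ and split each unordered pair $\{r,s\}$ according to whether it lies inside one block $\{2i-1,2i\}$ or across two blocks; the within-block terms give $\sum_i S_{a_i}S_{b_i}$ and the across-block terms give $(S_{a_i}+S_{b_i})(S_{a_j}+S_{b_j})$, so that
\[
\sum_i S_{a_i}S_{b_i}+\sum_{i<j}(S_{a_i}+S_{b_i})(S_{a_j}+S_{b_j})=\sum_{1\le r<s\le q}u_ru_s=\sigma_2(u_1,\ldots,u_q),
\]
the second elementary symmetric polynomial. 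Writing $n=1+\sum_{j=1}^{q}q^{c_j}$ with the $c_j$ the exponents $a_i,b_i$, the goal is thus $g_{n,q}=\sigma_2(S_{c_1},\ldots,S_{c_q})$, and the only tool I would use is the recurrence of Fact~\ref{f5}.

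\emph{A uniform claim.} Since Fact~\ref{f5} strips one power $q^{c}$ but simultaneously raises the units digit, the bare statement carries no induction, so I would instead prove the stronger, uniform assertion that for every $n=s+\sum_{j=1}^{\ell}q^{c_j}$ with $0\le s\le q-1$ and all $c_j\ge 1$,
\[
g_{n,q}=\sigma_{\,s+\ell-q+1}(S_{c_1},\ldots,S_{c_\ell}),
\]
with the conventions $\sigma_k=0$ for $k<0$ or $k>\ell$ and $\sigma_0=1$. Fact~\ref{f4} is the case $s=1,\ \ell=q,\ k=2$ (any exponent $a_i$ or $b_i$ equal to $0$ is absorbed into $s$, since $S_0=0$ simply drops that factor out of $\sigma_2$). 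The point of this formulation is that Fact~\ref{f5}, namely $g_{m+q^{c_\ell}}=g_{m+1}+S_{c_\ell}g_m$, becomes under the induction hypothesis exactly the Pascal identity $\sigma_k(u_1,\ldots,u_\ell)=\sigma_k(u_1,\ldots,u_{\ell-1})+u_\ell\,\sigma_{k-1}(u_1,\ldots,u_{\ell-1})$: the summand $g_{m+1}$ (units digit $s+1$, the same $\ell-1$ powers) supplies $\sigma_k$, and $S_{c_\ell}g_m$ supplies $u_\ell\,\sigma_{k-1}$. One checks directly that this matches Fact~\ref{f5} for every $a\ge 1$, not merely $a=1$. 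The base case $\ell=0$ records the initial values $g_{s,q}$ for $0\le s\le q-1$, which are $1$ for $s=q-1$ and $0$ otherwise, matching $\sigma_{s-q+1}()$.

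\emph{Main obstacle.} The delicate point is the carry: when $s=q-1$, the term $g_{m+1}$ no longer has units digit $s+1$ with the same powers, but units digit $0$ together with an extra $q^1$ (cascading further if $1$ already occurs among the $c_j$). I would therefore run the argument as strong induction on $n$ rather than on $\ell$ --- both $m$ and $m+1$ are genuinely smaller, as $q^{c_\ell}\ge q\ge 4$ --- and reconcile the canonical representation of $m+1$ with the naive one. Along the reductions feeding Fact~\ref{f4} this is painless: those chains keep $s+\ell=q+1$, so a carry forces $\ell\le 2<q-1$ and the carried contribution is a $\sigma$ of negative index, hence $0$, and the Pascal step still closes. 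The substantive issue, which I expect to cost the most work, is the underlying well-definedness: the displayed index $s+\ell-q+1$ can change under a carry even though $g_{n,q}$ depends only on $n$, so one must verify that the elementary symmetric expression is invariant under the carry moves relating different additive representations of $n$. This invariance is precisely what the characteristic-$2$ cancellations among the binomial coefficients hidden in $g_{n,q}$ guarantee, and it is the step I would check most carefully.
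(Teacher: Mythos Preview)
The paper does not prove this statement: Fact~\ref{f4} is quoted without proof from \cite[Theorem~6.1]{Fernando-Hou-Lappano-FFA-2013}, so there is no argument here to compare against. I can therefore only assess your sketch on its own merits.

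Your reformulation of the right-hand side as $\sigma_2(S_{c_1},\dots,S_{c_q})$ is correct, and the idea of driving the computation with Fact~\ref{f5} via the Pascal identity for elementary symmetric polynomials is exactly the right mechanism. The restricted induction you describe for the chains with $s+\ell\le q+1$ is sound: the carry at $s=q-1$ indeed forces $\ell\le 2$, the carried term lands at a negative symmetric index, and the recursion closes. That part, once written out, yields Fact~\ref{f4}.

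The genuine gap is your ``uniform assertion.'' As stated---for \emph{every} representation $n=s+\sum_{j=1}^{\ell}q^{c_j}$ with $0\le s\le q-1$ and $c_j\ge 1$---it is false, and the invariance under carry moves that you say ``the characteristic-$2$ cancellations guarantee'' does not hold. Take $q=4$ and $n=21$. The $q$-adic representation $s=1$, $(c_1,c_2)=(1,2)$ gives $\sigma_0(S_1,S_2)=1$, and indeed $g_{21,4}=g_6+S_2\,g_5=1$. But the alternative representation $s=1$, $\ell=5$, $c_1=\cdots=c_5=1$ (so $1+5\cdot 4=21$) gives index $s+\ell-q+1=3$ and
\[
\sigma_3(S_1,\dots,S_1)=\binom{5}{3}x^3=10\,x^3=0\quad\text{in characteristic }2,
\]
contradicting $g_{21,4}=1$. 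The failure occurs exactly when some exponent is repeated $q$ or more times; for instance the intermediate value $n=18$ with representation $(s,\ell)=(2,4)$, $c_j\equiv 1$, already gives $\sigma_3(x,x,x,x)=0\ne 1=g_{18,4}$. So the ``step you would check most carefully'' would in fact fail, and the uniform claim cannot serve as your induction hypothesis.

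The fix is simply to narrow the hypothesis. Restricting to representations with $s+\ell\le q+1$ (equivalently, never allowing any $c$-value to occur $q$ or more times) is closed under the Pascal reduction---the $m$-branch lowers $s+\ell$, the $(m+1)$-branch preserves it, and the single carry at $s=q-1$ lands at $0+\ell\le 2$---so strong induction on $n$ goes through cleanly and recovers Fact~\ref{f4} as the top layer $s+\ell=q+1$. Your argument already contains all the pieces for this; only the overreaching formulation needs to be withdrawn.
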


\begin{cor}
Let $q=4$, $e=6$, and $n=65921=(q-3)q^0+2q^3+q^{4}+q^{8}$. Then 
\[
g_{n,q}\equiv S_3^2+S_{4}^{q^2+1}\pmod{{\tt x}^{q^e}-{\tt x}},
\]
and $g_{n,q}$ is a PP of $\Bbb F_{q^e}$.
\end{cor}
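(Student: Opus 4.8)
The plan is to prove the Corollary in two steps: first establish the polynomial identity $g_{n,q}\equiv S_3^2+S_4^{q^2+1}\pmod{{\tt x}^{q^e}-{\tt x}}$, and then invoke Theorem~\ref{T1} with $k=2$ (so $e=3k=6$, and $k$ is indeed a positive even integer) to conclude that this polynomial, hence $g_{n,q}$, is a PP of $\Bbb F_{q^e}$.

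For the identity, I would begin by writing $n=65921$ in base $q=4$ and matching it against the shape required by Fact~\ref{f4}. Since $q/2=2$, that fact expects an exponent of the form $n=1+q^{a_1}+q^{b_1}+q^{a_2}+q^{b_2}$. The stated decomposition $n=(q-3)q^0+2q^3+q^4+q^8$ has $q-3=1$, so the constant term contributes the required $1$, and $2q^3=q^3+q^3$ supplies a repeated term; thus one reads off $\{a_1,b_1,a_2,b_2\}=\{3,3,4,8\}$ (in some order), e.g. $a_1=b_1=3$, $a_2=4$, $b_2=8$. Plugging into Fact~\ref{f4} gives
\[
g_{n,q}=S_3S_3+(S_3+S_3)(S_4+S_8)=S_3^2+0=S_3^2,
\]
since the cross term $(S_{a_1}+S_{b_1})$ vanishes when $a_1=b_1$ over a field of characteristic $2$. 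But this does not match the claimed $S_3^2+S_4^{q^2+1}$, so the naive reading of the decomposition must be adjusted: one should instead split the coefficient $2$ of $q^3$ differently, or — more likely — apply Fact~\ref{f5} (the recursion $g_{m+q^a,a}=g_{m+1,q}+S_a\cdot g_{m,q}$) to peel off terms and reduce $g_{n,q}$ step by step, using that $S_{2k}^{q^k+1}=S_4^{q^2+1}$ arises naturally from products $S_aS_b$ with $\{a,b\}$ reducing mod the exponent structure to $\{2, 2\}$-type contributions after applying $S_{k+a}\equiv S_{a}^{q^k}+\cdots$ type congruences mod ${\tt x}^{q^e}-{\tt x}$. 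Concretely, I would use $S_{j}\equiv S_{j-e}$-style reductions together with identities like $S_{k+1}=x+S_k^q$ and $S_{2k+1}\equiv$ (something) mod ${\tt x}^{q^6}-{\tt x}$, expanding $S_a S_b$ for the relevant $a,b\in\{3,4,8\}$ reduced mod $6$ where appropriate, and collecting terms until the expression matches $S_3^2+S_4^{q^2+1}$. This bookkeeping — correctly interpreting the base-$q$ digits of $n$, applying Facts~\ref{f5} and~\ref{f4}, and reducing all the resulting $S_j$'s modulo ${\tt x}^{q^e}-{\tt x}$ and the field relations in characteristic $2$ — is the main obstacle; everything is elementary but the algebra is delicate and it is easy to mismatch a term.

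Once the congruence $g_{n,q}\equiv S_{k+1}^2+S_{2k}^{q^k+1}\pmod{{\tt x}^{q^e}-{\tt x}}$ with $k=2$ is in hand, the second step is immediate: two polynomials congruent modulo ${\tt x}^{q^e}-{\tt x}$ induce the same function on $\Bbb F_{q^e}$, and Theorem~\ref{T1} says $g=S_{k+1}^2+S_{2k}^{q^k+1}$ is a PP of $\Bbb F_{q^e}$ whenever $k>0$ is even; taking $k=2$ gives $e=6$. Hence $g_{n,q}$ permutes $\Bbb F_{q^6}$, completing the proof.
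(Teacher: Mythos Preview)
Your overall plan is exactly the paper's, and your second step (invoking Theorem~\ref{T1} with $k=2$) is fine. The problem is a simple miscount in your application of Fact~\ref{f4}: with $q/2=2$ the sum $\sum_i S_{a_i}S_{b_i}$ has \emph{two} terms, not one. Taking $(a_1,b_1)=(3,3)$ and $(a_2,b_2)=(4,8)$ gives
\[
g_{n,q}=S_{a_1}S_{b_1}+S_{a_2}S_{b_2}+(S_{a_1}+S_{b_1})(S_{a_2}+S_{b_2})=S_3^2+S_4S_8+0,
\]
so the ``naive reading'' does work---you simply dropped the $S_4S_8$ term. There is no need for Fact~\ref{f5} or any delicate bookkeeping.

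From $g_{n,q}=S_3^2+S_4S_8$ the reduction is one line: modulo ${\tt x}^{q^6}-{\tt x}$ one has $x^{q^6}\equiv x$ and $x^{q^7}\equiv x^q$, so $S_8\equiv S_6+S_2$, and in characteristic~$2$
\[
S_6+S_2=(x+\cdots+x^{q^5})+(x+x^q)=x^{q^2}+x^{q^3}+x^{q^4}+x^{q^5}=S_4^{q^2}.
\]
Hence $g_{n,q}\equiv S_3^2+S_4\cdot S_4^{q^2}=S_3^2+S_4^{q^2+1}$, and Theorem~\ref{T1} with $k=2$ finishes the argument.
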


\begin{proof} We write $g_n$ for $g_{n,q}$.  Then by Fact~\ref{f4} we have
\[
\begin{split}
g_n\,&=S_3^2+S_4S_8\cr
&\equiv S_3^2+S_4 (S_6+S_2) \pmod{{\tt x}^{q^e}-{\tt x}}\cr
&= S_3^2+S_4 S_4^{q^2}\cr
&=S_3^2+S_{4}^{q^2+1}.
\end{split}
\]
Let $k=2$ in Theorem~\ref{T1}. Then It follows from Theorem~\ref{T1} that $g_n$ is a PP of $\Bbb F_{q^e}$.
\end{proof} 


\section{A Generalization}

The following theorem is a generalization of Theorem~\ref{T1}. 

\begin{thm}\label{T2}
Let $q$ be a power of $2$. Let $L\in \Bbb F_{q^{3k}}[x]$ be a $2$-linearized polynomial such that 
\begin{enumerate}
\item[(i)] $L$ permutes $\Bbb F_{q^{k}}$, and
\item[(ii)] $L+L^{q^{2k}} \equiv S_{2k}^2+S_{2k}^{2q^{k+1}} \pmod{{\tt x}^{q^{3k}}-{\tt x}}$.
\end{enumerate}
Then $L+S_{2k}^{q^k+1}$ is a PP of $\Bbb F_{q^{3k}}$.
\end{thm}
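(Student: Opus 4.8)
The plan is to mimic the proof of Theorem~\ref{T1} almost verbatim, replacing the concrete polynomial $S_{k+1}^2$ by the abstract linearized polynomial $L$, and checking that the only two properties of $S_{k+1}^2$ actually used in the argument are precisely the two hypotheses (i) and (ii). By Fact~\ref{f1} it suffices to show that $\sum_{x\in\Bbb F_{q^{3k}}}(-1)^{\text{Tr}(a(L(x)+S_{2k}^{q^k+1}(x)))}=0$ for every $0\neq a\in\Bbb F_{q^{3k}}$, where $\text{Tr}=\text{Tr}_{q^{3k}/2}$, and I would split into the same two cases according to whether $\text{Tr}_{q^{3k}/q^k}(a)$ vanishes.

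In Case~1, $\text{Tr}_{q^{3k}/q^k}(a)\neq 0$, I would invoke Fact~\ref{f2} and look for $y\in\Bbb F_{q^k}^*$ with $\text{Tr}(a g(x+y)-a g(x))$ a nonzero constant, where $g=L+S_{2k}^{q^k+1}$. Since $y\in\Bbb F_{q^k}$ we have $S_{2k}(y)=0$, so the $S_{2k}^{q^k+1}$ term contributes nothing and, because $L$ is $2$-linearized, $\text{Tr}(ag(x+y)-ag(x))=\text{Tr}(aL(y))$ is already constant in $x$. What remains is to find $y\in\Bbb F_{q^k}^*$ making $\text{Tr}(aL(y))\neq 0$, equivalently showing the $\Bbb F_2$-linear form $y\mapsto\text{Tr}(aL(y))$ is not identically zero on $\Bbb F_{q^k}$. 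If it were zero on all of $\Bbb F_{q^k}$, then since $L$ permutes $\Bbb F_{q^k}$ by (i), $L(y)$ ranges over all of $\Bbb F_{q^k}$, so $\text{Tr}_{q^{3k}/2}(a z)=\text{Tr}_{q^k/2}(\text{Tr}_{q^{3k}/q^k}(a) z)=0$ for all $z\in\Bbb F_{q^k}$, forcing $\text{Tr}_{q^{3k}/q^k}(a)=0$, contrary to assumption. This is cleaner than the gcd computation in the original proof, and it is exactly where hypothesis (i) is used.

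In Case~2, $\text{Tr}_{q^{3k}/q^k}(a)=0$, write $a=b+b^{q^k}$ with $b\notin\Bbb F_{q^k}$, so that $\text{Tr}(ag(x))=\text{Tr}(b(g(x)+g(x)^{q^{2k}}))$ as in the original proof. Now I would compute $g+g^{q^{2k}}\pmod{{\tt x}^{q^{3k}}-{\tt x}}$. Using the two congruences $S_{2k}^{q^k}+S_{2k}^{q^{2k}}\equiv S_{2k}$ (valid over $\Bbb F_{q^{3k}}$) and hypothesis (ii), the same chain of equalities as in~\eqref{e1} gives
\[
g+g^{q^{2k}}\equiv L+L^{q^{2k}}+S_{2k}^{q^k+1}+S_{2k}^{q^{2k}+1}\equiv S_{2k}^2+S_{2k}^{2q^{k+1}}+S_{2k}\cdot S_{2k}\equiv (S_{2k}^{q^{k+1}})^2 \pmod{{\tt x}^{q^{3k}}-{\tt x}},
\]
the last step because $S_{2k}^2+S_{2k}^2=0$ in characteristic $2$ leaves $S_{2k}^{2q^{k+1}}=(S_{2k}^{q^{k+1}})^2$. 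Hence $\text{Tr}(ag(x))=\text{Tr}(bS_{2k}(x)^{2q^{k+1}})=\text{Tr}(cS_{2k}(x))$ with $b=c^{2q^{k+1}}$ and $c\notin\Bbb F_{q^k}$, and one finishes exactly as in Theorem~\ref{T1}: write this as $\text{Tr}(x(c^{q^{3k}}+\cdots+c^{q^{k+1}}))$, observe via the gcd identity $\gcd({\tt x}^{k+1}+\cdots+{\tt x}^{3k},\,{\tt x}^{3k}+1)={\tt x}^k+1$ that the coefficient vanishes iff $c\in\Bbb F_{q^k}$, and conclude the exponential sum is $0$.

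I do not anticipate a genuine obstacle: the whole point of the generalization is that Theorem~\ref{T1} only used of $L=S_{k+1}^2$ that it is $2$-linearized, permutes $\Bbb F_{q^k}$ (there automatic since $S_{k+1}^2(y)=S_{k-1}^{2q}(y)$ has the right gcd behaviour, but here taken as a hypothesis), and satisfies the congruence (ii). The one place meriting care is verifying that the congruence $S_{k+1}^2+S_{k+1}^{2q^{2k}}\equiv x^2+x^{2q^k}+S_{2k}^2+S_{2k}^{2q^k}$ used in the original proof is, modulo ${\tt x}^{q^{3k}}-{\tt x}$, the same as $S_{2k}^2+S_{2k}^{2q^{k+1}}$ appearing in (ii) — i.e.\ that (ii) is the correct abstraction — and that squaring is a bijection on $\Bbb F_{q^{3k}}$ so that passing from $b$ to $c$ with $b=c^{2q^{k+1}}$ is legitimate. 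Both are routine.
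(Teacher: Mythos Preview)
Your proposal is correct and follows essentially the same approach as the paper. The paper's own proof of Theorem~\ref{T2} is just two sentences: it refers back to the proof of Theorem~\ref{T1}, noting that in Case~1 one chooses $y\in\Bbb F_{q^k}$ with $\text{Tr}_{q^k/2}\bigl(L(y)\,\text{Tr}_{q^e/q^k}(a)\bigr)\neq 0$ (which is precisely your contradiction argument via hypothesis~(i)), and that in Case~2 the computation~\eqref{e1} goes through verbatim thanks to hypothesis~(ii) --- exactly what you wrote out in detail.
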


\begin{proof}
Let $g=L+S_{2k}^{q^k+1}$. In Case 1 in Theorem~\ref{T1}, we choose $y\in\Bbb F_{q^k}$ such that $\text{Tr}_{q^k/2}\bigl(L(y)\,\text{Tr}_{q^e/q^k}(a)\bigr)\neq 0$. In Case 2, \eqref{e1} still holds because of condition $2$. 
\end{proof}


\end{document}